   \theoremstyle{plain}
   \newtheorem{thm}{Theorem}
   \newtheorem{lem}[thm]{Lemma}
   \newtheorem{cor}[thm]{Corollary}
   \theoremstyle{definition}
   \theoremstyle{remark}
\author{V. Manuilov}
\date{}
\address{Moscow Center for Fundamental and Applied Mathematics, Moscow State University,
Leninskie Gory 1, Moscow, 
119991, Russia}
\email{manuilov@mech.math.msu.su}
\title {Inverse semigroup of metrics on a double is fundamental}
\begin{document}

\begin{abstract}
We show that the inverse group of equivalence classes of metrics on two copies of a metric space is fundamental.

\end{abstract}

\maketitle

In \cite{M} we have shown that the set $M(X)$ of equivalence classes of metrics on two copies of a metric space $X$ has a rigid algebraic structure of an inverse semigroup. This, already, was unexpected. Now we go even further and show that this inverse semigroup is fundamental for any metric space $X$. Recall that a semigroup $S$ is an inverse semigroup if for any $s\in S$ there exists a unique pseudoinverse $t\in S$ ($t$ is pseudoinverse for $s$ if $sts=s$ and $tst=t$). We denote the pseudoinverse for $s$ by $s^*$. An inverse semigroup $S$ acts on the set $E=E(S)$ of its idempotents by $\mu_s(e)=ses^*$, $e\in E$. $S$ is \emph{fundamental} if $\mu_s\neq\mu_t$ for $s\neq t$. This allows to faithfully represent $S$ as a subset of self-maps of $E$. Our standard reference to the inverse semigroup theory is \cite{Lawson}.

Let $X=(X,d_X)$ be a metric space. 
A {\it double} of $X$ is a metric space $X\times\{0,1\}$ with a metric $d$ such that 
the restriction of $d$ on each copy of $X$ in
$X\times\{0,1\}$ equals $d_X$, and 
the distance between the two copies of $X$ is non-zero.
Let $\mathcal M(X)$ denote the set of all such metrics.

We identify $X$ with $X\times\{0\}$, and write $X'$ for $X\times\{1\}$. Similarly, we write 
$x$ for $(x,0)$ and $x'$ for $(x,1)$, $x\in X$. 
Note that metrics on a double of $X$ may differ only when the two points lie in different copies of $X$, so
to define a metric $d$ in $\mathcal M(X)$ it suffices to define $d(x,y')$ for all $x,y\in X$. The assumption that $d(X,X')>0$ means that there exists $c>0$ such that $d(x,y')\geq c$ for any $x,y\in X$.

Recall that two metrics, $d_1$, $d_2$, on the double of $X$ are \emph{coarsely equivalent}  if there exists a homeomorphism $\phi$ of $[0,\infty)$ such that 
$$
\phi^{-1}(d_1(x,y'))\leq d_2(x,y')\leq \phi(d_1(x,y'))
$$
for any $x,y\in X$. 
In this case we write $d_1\sim d_2$, or $[d_1]=[d_2]$. The quotient space $\mathcal M(X)/\sim$ we denote by $M(X)$.

The formula
\begin{equation}\label{composition}
(d_1 d_2)(x,y')=\inf_{u\in X}[d_1(x,u')+d_2(u,y')],\quad x\in X,z\in Z,
\end{equation} 
defines a metric $d_1 d_2$, $[d_1 d_2]\in\mathcal M(X)$, and it was shown in \cite{M} that $M(X)$ with this multiplication operation is an inverse semigroup with $\mathbf 0$ and $\mathbf 1$.

Recall that there is a partial order $\preceq$ on $M(X)$ from the inverse semigroup structure. For $s,t\in M(X)$, $s\preceq t$ if $s=et$ for some idempotent element $e\in M(X)$. In geometric terms, if $d,b\in\mathcal M(X)$, $s=[d]$, $t=[b]$ then $s\preceq t$ if there exists a homeomorphism $\phi$ of $[0,\infty)$ such that $b(x,y')\leq\phi(d(x,y'))$ for any $x,y\in X$. We write $s\prec t$ if $s\preceq t$ and $s\neq t$.

A similar partial order can be defined on the algebra $C_+(Z)$ of continuous $[0,\infty)$-valued functions on a metric space $Z$. For $f,g\in C_+(Z)$ we say that $f\preceq g$ if there exists a homeomorphism $\phi$ of $[0,\infty)$ such that $f(z)\geq \phi(g(z))$ for any $z\in Z$ (note that the order for functions is the opposite to the usual one).  

\begin{lem}\label{lem1}
If $f\preceq g$ does not hold then there exists a sequence $\{z_n\}_{n\in\mathbb N}$ in $Z$ and $C>0$ such that $\lim_{n\to\infty}g(z_n)=\infty$ and $\sup_{n\in\mathbb N}f(z_n)<C$.  

\end{lem}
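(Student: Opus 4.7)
The plan is to argue by contraposition: I assume that no sequence $\{z_n\}_{n\in\mathbb{N}}$ in $Z$ and no constant $C>0$ satisfy both $\lim_{n\to\infty}g(z_n)=\infty$ and $\sup_{n\in\mathbb{N}}f(z_n)<C$, and then I will exhibit a homeomorphism $\phi$ of $[0,\infty)$ with $f(z)\geq\phi(g(z))$ for every $z\in Z$, thereby establishing $f\preceq g$.

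The natural auxiliary function is
\[
\psi(t)=\inf\{f(z):z\in Z,\ g(z)\geq t\},\qquad t\geq 0,
\]
which is non-decreasing in $t$ and tautologically satisfies $f(z)\geq\psi(g(z))$ for every $z$. It is therefore enough to produce a homeomorphism $\phi$ of $[0,\infty)$ with $\phi(t)\leq\psi(t)$ for all $t\geq 0$. The hypothesis translates directly into $\lim_{t\to\infty}\psi(t)=\infty$: if $\psi$ were bounded by some $M$, then for each $n$ one could pick $z_n$ with $g(z_n)\geq n$ and $f(z_n)<M+1$, yielding precisely the forbidden sequence with $C=M+1$.

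With $\psi(t)\to\infty$ in hand, the main step is the construction of $\phi$. I will choose an unbounded increasing sequence $0=s_0<s_1<s_2<\cdots$ together with strictly increasing targets $0=v_0<v_1<v_2<\cdots$, $v_n\to\infty$, satisfying $v_{n+1}\leq\psi(s_n)$ for every $n$ (possible since $\psi\to\infty$), and take $\phi$ to be the piecewise linear function on $[0,\infty)$ with $\phi(s_n)=v_n$. Then $\phi$ is a homeomorphism of $[0,\infty)$, and on each $[s_n,s_{n+1}]$ the monotonicity of $\psi$ gives
\[
\phi(t)\leq v_{n+1}\leq\psi(s_n)\leq\psi(t),
\]
so $f(z)\geq\psi(g(z))\geq\phi(g(z))$ as required. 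The delicate point will be the initial interval, where strict monotonicity of $\phi$ forces $v_1>0$ and hence $\psi(0)>0$; this is automatic in the applications of interest, where $f$ and $g$ arise from metrics on a double and are bounded below by a positive constant, giving $\psi(0)\geq c>0$.
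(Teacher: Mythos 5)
Your proof is correct and follows essentially the same route as the paper's: a contrapositive argument in which the infimum of $f$ over superlevel sets of $g$ (your $\psi(t)$; the paper uses $k_n=\inf_{Z_n}f$ over the slabs $Z_n=\{z: n-1\le g(z)\le n\}$) is shown to tend to infinity, after which a homeomorphism $\phi$ is interpolated below it. The positivity issue you flag at $t=0$ is present, unremarked, in the paper's own proof as well (it requires $\phi(n)<k_n$ with $\phi$ a homeomorphism of $[0,\infty)$, hence tacitly $k_n>0$), and in both arguments it is resolved only by the lower bound $d(X,X')>0$ available in the intended application.
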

\begin{proof}
We shall prove the contrapositive: if for any sequence $\{z_n\}_{n\in\mathbb N}$ such that $\lim_{n\to\infty}g(z_n)=\infty$ one has $\lim_{n\to\infty}f(z_n)=\infty$ then $f\geq g$. 

Set $Z_n=\{z\in Z:n-1\leq g(z)\leq n\}$. Then $Z=\cup_{n\in\mathbb N}Z_n$. Let $k_n=\inf_{z\in Z_n}f(z)$. Suppose that the sequence $\{k_n\}$ contains an infinite subsequence $\{k_{n_i}\}_{i\in\mathbb N}$ bounded by some $D>0$. Then for each $i\in\mathbb N$ there exists $z_{n_i}\in Z_{n_i}$ such that $f(z_{n_i})<D+1$. As $g(z_{n_i})>n_i-1$, we obtain a contradiction. Therefore $\lim_{n\to\infty}k_n=\infty$. Let $\phi$ be a homeomorphism of $[0,\infty)$ such that $\phi(n)<k_n$ for each $n\in\mathbb N$. Then, for $z\in Z_n$, one has $\phi(g(z))\leq \phi(n)<k_n\leq f(z)$, hence $f\geq g$.
\end{proof}

\begin{cor}\label{C}
Let $b,c\in\mathcal M(X)$, and let $c\preceq b$ does not hold. Then there exist unbounded sequences $\{x_n\}_{n\in\mathbb N}$ and $\{y_n\}_{n\in\mathbb N}$ in $X$ and $C>0$ such that $\lim_{n\to\infty}c(x_n,y'_n)=\infty$ and $\sup_{n\in\mathbb N}b(x_n,y'_n)<C$. 

\end{cor}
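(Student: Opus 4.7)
The plan is a direct application of Lemma~\ref{lem1}. Let $Z := X \times X$ equipped with the $\ell^1$ product metric, and define
\[
f(x,y) := b(x,y'), \qquad g(x,y) := c(x,y').
\]
These are continuous nonnegative functions on $Z$: the triangle inequality in the double gives
\[
|b(x_1,y_1') - b(x_2,y_2')| \leq d_X(x_1,x_2) + d_X(y_1,y_2),
\]
and likewise for $c$. Unwinding the geometric description of $\preceq$ on $M(X)$ together with the definition of $\preceq$ on $C_+(Z)$, and using that the inverse of a homeomorphism of $[0,\infty)$ is again such a homeomorphism, the hypothesis that $c \preceq b$ fails in $M(X)$ matches exactly the hypothesis that $f \preceq g$ fails in $C_+(Z)$. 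Lemma~\ref{lem1} then supplies points $(x_n,y_n) \in X \times X$ and $C>0$ with $c(x_n,y_n') \to \infty$ and $b(x_n,y_n') < C$, which is the numerical content of the conclusion.

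For the unboundedness of the sequences, I would fix a basepoint $x_0 \in X$ and invoke the triangle inequality in the double,
\[
c(x_n, y_n') \;\leq\; c(x_0, x_0') + d_X(x_n, x_0) + d_X(y_n, x_0),
\]
so that $c(x_n,y_n') \to \infty$ forces $d_X(x_n,x_0) + d_X(y_n,x_0) \to \infty$. After passing to a subsequence, at least one of $\{x_n\}$, $\{y_n\}$ is unbounded in $X$.

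The main obstacle is upgrading this to the \emph{symmetric} statement that both coordinate sequences can be arranged to be unbounded. A naive shift of a bounded coordinate toward infinity would increase $b$ by the amount of the shift and risks destroying the uniform bound $b(x_n,y_n') < C$. My intended remedy is to exploit the auxiliary inequality $c(x_n,y_n') \leq c(x_n,x_n') + d_X(x_n,y_n)$: once the diagonal term $c(x_n,x_n')$ is under control (for instance, because one coordinate lies in a bounded set), the separation $d_X(x_n,y_n)$ itself must tend to infinity, and one can then split this separation between the two coordinates—proceeding by a case analysis on which of $\{x_n\}, \{y_n\}$ is bounded—choosing replacements that still witness the failure of $c \preceq b$ while pushing both coordinates off to infinity.
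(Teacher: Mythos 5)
Your reduction to Lemma~\ref{lem1} with $Z=X\times X$ is exactly the paper's first step, and your observation that $c(x_n,y_n')\to\infty$ forces $d_X(x_0,x_n)+d_X(x_0,y_n)\to\infty$, so that at least one coordinate sequence is unbounded, is also the paper's argument. The gap is in your treatment of the remaining case, where one coordinate sequence is bounded. The remedy you sketch --- splitting the separation $d_X(x_n,y_n)$ between the two coordinates by choosing replacement points --- does not work: $X$ is an arbitrary metric space, so there need not exist any intermediate points realizing such a split; and even when they do exist, replacing $x_n$ by a point $z_n$ with $d_X(x_n,z_n)\to\infty$ changes $b(\cdot,y_n')$ by as much as $d_X(x_n,z_n)$ and so destroys the uniform bound $b<C$ --- which is precisely the defect you yourself identified in the ``naive shift.'' Preserving that bound is essential, since it is what Lemma~\ref{main} later uses.

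What you are missing is that the mixed case is vacuous, so no modification of the sequences is needed at all. The uniform bound $\sup_n b(x_n,y_n')<C$ --- which your sketch of this step never uses --- together with the triangle inequality in the double $(X\times\{0,1\},b)$ ties the two coordinate sequences to each other:
\[
d_X(y_m,y_n)=b(y_m',y_n')\leq b(y_m',x_m)+b(x_m,x_n)+b(x_n,y_n')\leq d_X(x_m,x_n)+2C .
\]
Hence if $\{x_n\}$ is bounded then so is $\{y_n\}$, and then $c(x_n,y_n')\leq d_X(x_n,x_0)+c(x_0,x_0')+d_X(x_0,y_n)$ is bounded, contradicting $c(x_n,y_n')\to\infty$. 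This is essentially the paper's second step (phrased there as a lower bound on the distance from $x_0$ to $y_n'$ in terms of $d_X(x_0,x_n)-b(x_n,y_n')$). So the statement is correct and your first half matches the paper, but the argument for simultaneous unboundedness needs to be replaced by this two-line triangle-inequality observation rather than a modification of the sequences.
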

\begin{proof}
Take $Z=X\times X$. Lemma \ref{lem1} provides sequences $\{x_n\}_{n\in\mathbb N}$ and $\{y_n\}_{n\in\mathbb N}$ in $X$ and $C>0$ such that $\lim_{n\to\infty}c(x_n,y'_n)=\infty$ and $\sup_{n\in\mathbb N}b(x_n,y'_n)<C$. It remains to show that they are not bounded. Suppose that both sequences are bounded, i.e. there exists $R>0$ such that $d_X(x_0,x_n)<R$ and $d_X(x_0,y_n)<R$ for some $x_0\in X$ and for any $n\in\mathbb N$. Then 
$$
c(x_n,y'_n)\leq d_X(x_n,x_0)+c(x_0,x'_0)+d_X(x_0,y_n)
$$ 
is bounded --- a contradiction. Thus, at least one of the two sequences, say, $\{x_n\}_{n\in\mathbb N}$, is unbounded. Suppose that the other is bounded. Then 
$$
d_X(x_0,y_n)\geq d_X(x_0,x_n)-b(x_n,y'_n)\geq d_X(x_0,x_n)-C
$$ 
--- a contradiction.
\end{proof}

Given $b,c\in\mathcal M(X)$ such that $c\preceq b$ does not hold, and $\{x_n\}_{n\in\mathbb N}$, $\{y_n\}_{n\in\mathbb N}$ and $C$ as in Corollary \ref{C}, define a metric $a\in\mathcal M(X)$ by 
\begin{equation}\label{a}
a(x,y')=\inf_{n\in\mathbb N}(d_X(x,x_n)+d_X(y_n,y)+C), \quad x,y\in X.
\end{equation} 

\begin{lem}\label{main}
One has $aba^*\prec aca^*$.

\end{lem}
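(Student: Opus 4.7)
My plan is to establish the two parts of $aba^* \prec aca^*$: the non-strict inequality $aba^* \preceq aca^*$ and strictness $aba^* \neq aca^*$ in $M(X)$. A common ingredient is an explicit formula for the triple product. Using the composition (\ref{composition}) with pseudoinverse $a^*(x,y') = a(y,x')$, the two inner infima over the gluing variables collapse via the triangle inequality in $b$ (optimal gluings being $u = y_n$, $v = y_m$), yielding
\[
(aba^*)(x,z') = \inf_{n,m \in \mathbb N}\bigl[d_X(x,x_n) + d_X(z,x_m) + 2C + b(y_n, y_m')\bigr],
\]
and the analogous expression for $aca^*$ with $c(y_n, y_m')$ in place of $b(y_n, y_m')$. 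The outer structure (the $d_X$-terms and the $2C$) is common to both, so differences come solely from the innermost term.

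For $aba^* \preceq aca^*$ the plan is to argue by the level-set technique of Lemma~\ref{lem1}. If $(aca^*)(\xi_j, \eta_j')$ remains bounded along a test-sequence, picking near-optimal $(n_j, m_j)$ in the $aca^*$-infimum gives bounded values of $d_X(\xi_j, x_{n_j})$, $d_X(\eta_j, x_{m_j})$, and $c(y_{n_j}, y_{m_j}')$. Since $b(y_{n_j}, y_{m_j}')$ can be bounded from a fixed basepoint via triangle inequalities (using the already-bounded $d_X$-distances together with $b(x_0, x_0')$), the same indices yield a bounded value of $(aba^*)(\xi_j, \eta_j')$. Taking the contrapositive of Lemma~\ref{lem1} supplies the required homeomorphism $\phi$ relating the two metrics.

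For strictness I would evaluate both sides at the pair $(x_n, y_n')$. Taking $u = x_n$, $v = y_n$ in the unreduced infimum for $aba^*$ and using $a(x_n, y_n') \leq C$ (which holds with index $n$ in the defining infimum of $a$), one obtains
\[
(aba^*)(x_n, y_n') \leq a(x_n, x_n') + b(x_n, y_n') + a(y_n, y_n') \leq 2C + C_0 + 2\,d_X(x_n, y_n),
\]
where $C_0 := \sup_n b(x_n, y_n')<\infty$. Symmetrically, a chain of triangle inequalities applied to $c$ produces the matching lower bound $(aca^*)(x_n, y_n') \geq c(x_n, y_n') + 2C - 2 \sup_k d_X(x_k, y_k)$. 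Since $c(x_n, y_n') \to \infty$, provided $\sup_k d_X(x_k, y_k)$ is finite the quantity $(aca^*)(x_n, y_n')$ tends to infinity while $(aba^*)(x_n, y_n')$ stays bounded, precluding coarse equivalence. The main obstacle I anticipate is exactly controlling $\sup_k d_X(x_k, y_k)$: if this is infinite one must refine the sequences of Corollary~\ref{C} (for instance by restricting Lemma~\ref{lem1} to $\{(x,y) \in X \times X : d_X(x,y) \leq R\}$ for large $R$) or choose a different family of evaluation points in order to preserve the bounded-versus-unbounded dichotomy.
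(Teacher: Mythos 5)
Your collapsed formula $(aba^*)(x,z')=\inf_{n,m}[d_X(x,x_n)+d_X(z,x_m)+2C+b(y_n,y'_m)]$ is correct, and your overall strategy --- explicit formula for the triple product, comparability via Lemma~\ref{lem1}, strictness by evaluating at special points --- is the same as the paper's. But both halves of your argument have genuine gaps. In the comparability step you assert that bounded values of $c(y_{n_j},y'_{m_j})$, $d_X(\xi_j,x_{n_j})$ and $d_X(\eta_j,x_{m_j})$ force $b(y_{n_j},y'_{m_j})$ to be bounded ``from a fixed basepoint''. They do not: the sequences $\{x_n\}$, $\{y_n\}$ are unbounded by construction, so $y_{n_j},y_{m_j}$ escape every ball around a basepoint, and nothing in the hypotheses relates the values of $b$ and $c$ at an arbitrary pair. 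The only control on $b$ is $b(x_k,y'_k)\leq C$ on the matched pairs, which bounds $b(y_n,y'_m)$ only up to an additive error of order $d_X(x_n,y_n)$ --- exactly the quantity you flag as the obstacle in the strictness step. That obstacle is real and fatal to the argument as written: take $X=\mathbb Z$, $b(k,l')=|k+l|+1$, $c(k,l')=|k-l|+1$, $x_n=n$, $y_n=-n$. All the conclusions of Corollary~\ref{C} hold with $C=2$, yet $d_X(x_n,y_n)=2n\to\infty$, and one computes $(aba^*)(x,y')=\inf_{n,m\geq 1}[|x-n|+|y-m|+n+m]+5$ while $(aca^*)(x,y')=\inf_{n,m\geq 1}[|x-n|+|y-m|+|n-m|]+5$; hence $(aba^*)(x_k,x'_k)\geq 2k+5$ is unbounded while $(aca^*)(x_k,x'_k)=5$, so the bounded-versus-unbounded dichotomy at your evaluation points (and at the paper's) comes out reversed. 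Your proposed repairs --- restricting Lemma~\ref{lem1} to pairs with $d_X(x,y)\leq R$, or changing the evaluation points --- are not carried out, and the first cannot work in general since all witnesses to the failure of $\preceq$ may have $d_X(x_n,y_n)\to\infty$, as in this example. So the proposal is not yet a proof.

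In fairness, your careful bookkeeping has isolated a defect that is also present in the paper's own argument: its displayed computation silently replaces the correct middle term $b(y_n,y'_m)$ by $b(x_n,y'_m)$, for which the hypothesis $b(x_k,y'_k)\leq C$ would indeed yield the bound $3C+2$; in the example above the paper's claims (\ref{f1}) and (\ref{f2}) both fail (although the conclusion of the Lemma happens to survive there, with the roles of the two products exchanged). What is missing from both arguments is a device that transports the hypothesis, which lives on the pairs $(x_n,y'_n)$, to pairs of the diagonal form $(w_n,w'_m)$ that a conjugation $a(\cdot)a^*$ can actually detect; until that is supplied, neither your write-up nor a transcription of the paper's closes the argument.
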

\begin{proof}
By definition ((\ref{composition}) and (\ref{a})), 
\begin{eqnarray*}
(aca^*)(x,y')&=&\inf_{u,v\in X}[a(x,u')+1+c(u,v')+1+a^*(v,y')]\\
&=&\inf_{u,v\in X}[a(x,u')+a(y,v')+c(u,v')+2]\\
&=&\inf_{\substack{u,v\in X,\\n,m\in\mathbb N}}[d_X(x,x_n)+C+d_X(y_n,u)+d_X(y,x_m)+C+d_X(y_m,v)+c(u,v')+2].
\end{eqnarray*}
Similarly, 
$$
(aba^*)(x,y')=\inf_{\substack{u,v\in X,\\n,m\in\mathbb N}}[d_X(x,x_n)+d_X(y_n,u)+d_X(y,x_m)+d_X(y_m,v)+b(u,v')+2C+2].
$$
Fix $n$ and $m$ such that the infimum for $(aca^*)(x,y')$ is attained. Then we have 
$$
(aca^*)(x,y')=\inf_{u,v\in X}[d_X(x,x_n)+d_X(y_n,u)+d_X(y,x_m)+d_X(y_m,v)+c(u,v')+2C+2];
$$
and
\begin{eqnarray*}
(aba^*)(x,y')&\leq&\inf_{u,v\in X}[d_X(x,x_n)+d_X(y_n,u)+d_X(y,x_m)+d_X(y_m,v)+b(u,v')+2C+2]\\
&\leq& [d_X(x,x_n)+d_X(y_n,y_n)+d_X(y,x_m)+d_X(y_m,y_m)+b(x_n,y'_m)+2C+2]\\
&=&d_X(x,x_n)+d_X(y,x_m)+b(x_n,y'_m)+2C+2\\
&\leq&d_X(x,x_n)+d_X(y,x_m)+d_X(x_n,x'_m)+b(x_m,y'_m)+2C+2\\
&\leq&d_X(x,x_n)+d_X(y,x_m)+d_X(x_n,x'_m)+3C+2.
\end{eqnarray*}

Then
\begin{eqnarray*}
(aca^*)(x,y')&=&\inf_{u,v\in X}[d_X(x,x_n)+d_X(y_n,u)+d_X(y,x_m)+d_X(y_m,v)+c(u,v')+2C+2]\\
&=&d_X(x,x_n)+d_X(y,x_m)+3C+2+\inf_{u,v\in X}[d_X(y_n,u)+d_X(y_m,v)+c(u,v')]-C\\
&\geq&(aba^*)(x,y')+\inf_{u,v\in X}[d_X(y_n,u)+d_X(y_m,v)+c(u,v')]-C. 
\end{eqnarray*}

It follows from the triangle inequality that
$$
d_X(x_n,u)+d_X(y_m,v)+c(u,v')\geq c(x_n,y'_m),
$$
hence
\begin{eqnarray*}
(aca^*)(x,y')&\geq&(aba^*)(x,y')+\inf_{u,v\in X}[d_X(y_n,u)+d_X(y_m,v)+c(u,v')]-C\\
&\geq&(aba^*)(x,y')+c(x_n,y'_m)-C.
\end{eqnarray*}

As both $\{x_n\}_{n\in\mathbb N}$ and $\{y_n\}_{n\in\mathbb N}$ are unbounded and $c(x_n,y'_n)\to\infty$, passing to a subsequence, we may assume that for all $n,m\in\mathbb N$ we have $c(x_n,y'_m)>C$, hence for any $x,y\in X$ we have $(aca^*)(x,y')\geq (aba^*)(x,y')$, so $aca^*\preceq aba^*$.

It remains to show that $aba^*\neq aca^*$. 
 We have
\begin{eqnarray*}
(aba^*)(x_k,x'_k)&\leq&\inf_{\substack{u,v\in X,\\n,m\in\mathbb N}}[d_X(x_k,x_n)+d_X(y_n,u)+d_X(x_k,x_m)+d_X(y_m,v)+b(u,v')+2C+2]\\
&\leq&\inf_{n,m\in\mathbb N} [d_X(x_k,x_n)+d_X(x_k,x_m)+b(x_n,y'_m)+2C+2]\\
&\leq&d_X(x_k,x_k)+d_X(x_k,x_k)+b(x_k,y'_k)+2C+2
\ \leq \ 3C+2
\end{eqnarray*}
for any $k\in\mathbb N$, hence 
\begin{equation}\label{f1}
\sup_{k\i\mathbb N}(aba^*)(x_k,x'_k)<\infty. 
\end{equation}

On the other hand, 
\begin{eqnarray*}
(aca^*)(x_k,x'_k)&=&\inf_{\substack{u,v\in X,\\n,m\in\mathbb N}}[d_X(x,x_n)+C+d_X(y_n,u)+d_X(x_k,x_m)+C+d_X(y_m,v)+c(u,v')+2]\\
&\geq&\inf_{n,m\in\mathbb N}[d_X(x_k,x_n)+d_X(x_k,x_m)+c(x_n,y'_m)+2C+2].
\end{eqnarray*}
As $\{x_n\}_{n\in\mathbb N}$ is unbounded, we may pass to a subsequence with the property that $d_X(x_k,\{x_1,\ldots,x_{k-1}\})>2^k$. 
If $n\neq k$ or $m\neq k$ then 
$$
d_X(x_k,x_n)+d_X(x_k,x_m)+c(x_n,y'_m)>2^k. 
$$
If $n=m=k$ then 
$$
d_X(x_k,x_n)+d_X(x_k,x_m)+c(x_n,y'_m)>c(x_k,y'_k). 
$$
In both cases, 
\begin{equation}\label{f2}
\lim_{k\to\infty}(aca^*(x_k,x'_k))=\infty. 
\end{equation}
It follows from (\ref{f1}) and (\ref{f2}) that the metrics $aba^*$ and $aca^*$ are not equivalent. 
\end{proof}

\begin{thm}
The inverse semigroup $M(X)$ is fundamental.

\end{thm}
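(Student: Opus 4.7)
The plan is to use Lemma \ref{main} to produce, for any distinct $s\neq t$ in $M(X)$, an idempotent $e$ with $\mu_s(e)\neq\mu_t(e)$. Given $s\neq t$, the antisymmetry of $\preceq$ lets me choose representatives $b\in s$, $c\in t$ so that the hypothesis of Lemma \ref{main} applies; I then extract from its construction the unbounded sequences $\{x_n\},\{y_n\}$, the constant $C>0$ with $b(x_n,y'_n)<C$ and $c(x_n,y'_n)\to\infty$, and the metric $a$ defined by (\ref{a}). My distinguishing idempotent is $e:=[a^*a]\in M(X)$, which is an idempotent since $(xx^*)^2=xx^*$ in any inverse semigroup.

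Next, I would expand $\mu_s(e)=[ba^*ab^*]$ and $\mu_t(e)=[ca^*ac^*]$ using (\ref{composition}) and the formula for $a$. Repeatedly applying the triangle-inequality identity $\inf_u[f(x,u')+d_X(u,y_n)]=f(x,y'_n)$ collapses the iterated infima to
\[
(ba^*ab^*)(x,y')=\inf_{n,m\in\mathbb N}\bigl[b(x,y'_n)+d_X(x_n,x_m)+b(y,y'_m)\bigr]+2C,
\]
and the analogous formula with $c$ in place of $b$. Specializing to $x=y=x_k$ and taking $n=m=k$, the bound $b(x_k,y'_k)<C$ gives $(ba^*ab^*)(x_k,x'_k)<4C$, so this sequence is bounded in $k$. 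The theorem then reduces to showing $(ca^*ac^*)(x_k,x'_k)\to\infty$, for then no homeomorphism of $[0,\infty)$ can relate the two metrics and $\mu_s(e)\neq\mu_t(e)$.

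The main obstacle is this divergence, which I would handle by refining the sequences much as in the proof of Lemma \ref{main}: passing to a subsequence making $c(x_n,y'_n)$ monotone increasing, and then, using unboundedness of $\{x_n\}$, passing further to one with $d_X(x_n,\{x_1,\ldots,x_{n-1}\})>c(x_n,y'_n)+n$ for every $n$. The triangle inequality in the double, $d_X(x_k,x_p)\leq c(x_k,y'_p)+c(x_p,y'_p)$, then yields $c(x_k,y'_p)\geq k$ for every $p\neq k$, while $c(x_k,y'_k)\to\infty$ directly, so $\min_p c(x_k,y'_p)\to\infty$. Combined with the elementary bound $(ca^*ac^*)(x_k,x'_k)\geq 2\min_p c(x_k,y'_p)+2C$, this forces $(ca^*ac^*)(x_k,x'_k)\to\infty$, establishing fundamentality. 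The delicate point is precisely the double-subsequence extraction: one must arrange that the spreading of $\{x_n\}$ in $X$ outpaces the growth of $c(x_n,y'_n)$, so that the pairwise distances dominate the cross-distance values uniformly in the index of the infimum.
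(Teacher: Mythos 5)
Your overall architecture is close to the paper's: same metric $a$ from (\ref{a}), same distinguishing idempotent $e=[a^*a]$, and the same test points $(x_k,x'_k)$. But you diverge at the decisive step, and that is where a genuine gap appears. The paper never tries to estimate $tet^*=c(a^*a)c^*$ directly; it proves $aba^*\prec aca^*$ (Lemma \ref{main}) and then transfers this to $ses^*\neq tet^*$ purely algebraically, via the fact that $x\prec y$ implies $xx^*\prec yy^*$ in an inverse semigroup, applied to $x=ata^*$, $y=asa^*$. The point of that detour is that in $(aca^*)(x_k,x'_k)$ the double infimum has the form $\inf_{n,m}[d_X(x_k,x_n)+d_X(x_k,x_m)+c(x_n,y'_m)]+\mathrm{const}$, so one only needs to spread $\{x_n\}$ out by an amount ($2^k$) chosen \emph{independently of} $c$: every term with $(n,m)\neq(k,k)$ is killed by the $d_X$ summands, and the diagonal term is $c(x_k,y'_k)\to\infty$. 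Your quantity $(ca^*ac^*)(x_k,x'_k)=\inf_{n,m}[c(x_k,y'_n)+d_X(x_n,x_m)+c(x_k,y'_m)]+2C$ has the $c$-terms on the outside, so you are forced to control $\inf_n c(x_k,y'_n)$ over \emph{all} $n$, a genuinely harder problem.

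Your proposed fix --- extracting a subsequence with $d_X(x_n,\{x_1,\dots,x_{n-1}\})>c(x_n,y'_n)+n$ --- is not always possible, because the growth of $c(x_n,y'_n)$ can dominate every achievable spread of $\{x_n\}$. Concretely, take $X=\{2^n:n\in\mathbb N\}\subset\mathbb R$, $x_n=y_n=2^n$, $b(x,y')=|x-y|+1$ and $c(x,y')=x+y+1$ (both lie in $\mathcal M(X)$, $b(x_n,y'_n)=1$, $c(x_n,y'_n)=2^{n+1}+1\to\infty$, so $c\preceq b$ fails). For any subsequence and any $i<j$ one has $d_X(x_{n_j},x_{n_i})=2^{n_j}-2^{n_i}<2^{n_j+1}+1=c(x_{n_j},y'_{n_j})$, so the required inequality fails for every choice of indices. (In this example $\inf_p c(x_k,y'_p)\to\infty$ happens to hold anyway, so your target statement may well be salvageable, but proving it in general seems to require a Ramsey-type extraction using that no two distinct indices $n,m$ can both satisfy $c(x_k,y'_n),c(x_k,y'_m)\leq M$ once $d_X(y_n,y_m)>2M$ --- a substantially different and more delicate argument than the one you sketch.) The clean repair is simply to use Lemma \ref{main} as the paper does: deduce $ata^*\prec asa^*$, pass to $xx^*\prec yy^*$, and read off $a(tet^*)a^*\neq a(ses^*)a^*$, hence $\mu_t(e)\neq\mu_s(e)$.
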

\begin{proof}
Let $s,t\in M(X)$, $s\neq t$. Then we cannot have both $s\preceq t$ and $t\preceq s$, so at least one of these does not hold. Suppose that $s\preceq t$ is not true. Then, by Lemma \ref{main}, there exists $a\in M(X)$ such that $ata^*\prec asa^*$. 

Note that for an inverse semigroup $S$, $x\prec y$ implies that $xx^*\prec yy^*$. Indeed, $x\prec y$ implies that there exists $f\in E(S)$ such that $x=fy$. Then $xx^*=fyy^*f=fyy^*$, hence $xx^*\preceq yy^*$ (recall that idempotents commute in inverse semigroups). It remains to show that $xx^*\neq yy^*$. Suppose the contrary. Then $fyy^*=xx^*=yy^*$. Multiplying by $y$ from the right, we get $fy=fyy^*y=yy^*y=y$ which implies that $x=y$.

Taking $x=ata^*$, $y=asa^*$, we get $ata^*at^*a\prec asa^*as^*a$. Set $e=a^*a\in E(M(X))$. Then $atet^*a^*\neq ases^*a^*$. But this contradicts $tet^*=ses^*$, hence $tet^*\neq ses^*$, i.e. $\mu_s(e)\neq \mu_t(e)$.   
\end{proof}


\end{document}